\newtheorem{lemma}{Lemma}[section]
\newtheorem{theorem}{Theorem}[section]
\newtheorem{proposition}{Proposition}[section]
\newtheorem{assumption}{Assumption}
\newtheorem{corollary}{Corollary}[section]
\newcommand{\E}{\mathbb{E}}
\title{The different asymptotic regimes of nearly unstable autoregressive processes}
\author{Thibault Jaisson\\ CMAP, \'Ecole Polytechnique Paris \\ thibault.jaisson@polytechnique.edu
\\$~~$\\Mathieu Rosenbaum\\ LPMA, Universit\'e Pierre et Marie Curie (Paris 6)\\mathieu.rosenbaum@upmc.fr
}
\begin{document}

\maketitle

\begin{abstract}
\noindent We extend the results of \cite{cumberland1982weak,phillips1987towards,van1999asymptotic} about the convergence of nearly unstable AR($p$) processes to the infinite order case. To do so, we proceed as in \cite{jaisson2013limit,jaisson2014fractional} by using limit theorems for some well chosen geometric sums. We prove that when the coefficients sequence has a light tail, nearly unstable AR($\infty$) processes behave as Ornstein-Uhlenbeck models. However, in the heavy tail case, we show that fractional diffusions arise as limiting laws for such processes.
\end{abstract}

\noindent \textbf{Keywords:}
Autoregressive processes, AR($\infty$), nearly unstable processes, limit theorems, Ornstein-Uhlenbeck processes, fractional diffusions, volatility modeling.

\section{Introduction}

\noindent In the field of time series analysis, autoregressive processes (AR processes for short) probably represent the most classical class of models. 
In this work, a discrete time process $y$ is said to be autoregressive if it satisfies $y_0=\varepsilon_0$ and for $k\geq 1$,
$$y_k=\varepsilon_k+\sum_{i=1}^{k}\phi_i y_{k-i},$$
where $\phi$ is a sequence of non-negative coefficients and the $\varepsilon_k$ are iid centered random variables with finite second order moment. The process is called AR($\infty$) if an infinite number of coefficients are non-zero and AR($p$) if they are all equal to zero after rank $p$. A first reason for the popularity of AR processes is the fact that they are quite tractable. Furthermore, it is usually easy to give an interpretation for such dynamics in practice. That is why they are used in various fields such as population dynamics, see \cite{cumberland1982weak,royama1992analytical}, finance, see \cite{hasbrouck1991measuring}, or telecommunications, see \cite{adas1997traffic}\footnote{Of course the references given here are just some examples among many others.}.\\

\noindent For classical AR processes starting at $-\infty$ and with non-negative coefficients, it is well known that a necessary condition for the existence of a stationary solution is
\begin{equation}\label{stabcond}
\sum_{i=1}^{+\infty}\phi_i<1.
\end{equation}
Thus, \eqref{stabcond} is called stability condition. However, in many applications, notably finance, the case where this stability condition is almost saturated: $$0\leq 1-\sum_{i=1}^{+\infty}\phi_i\ll 1$$ seems to be relevant, see \cite{cumberland1982weak,markevivciute2013asymptotic}.\\

\noindent To address this near instability situation, several authors have considered the framework of sequences of AR(1) processes of the form 
$$y^n_k=\rho_n y^n_{k-1}+\varepsilon_k,$$
with the autoregression parameter $\rho_n$ tending to one as $n$ goes to infinity, see for example \cite{cumberland1982weak,markevivciute2012functional,phillips1987towards}. In these works, it is shown that after suitable renormalization, such processes asymptotically behave as Ornstein-Uhlenbeck models. The estimation of the AR coefficient has also been extensively studied in such an asymptotic, see \cite{chan1988parameter,chan1987asymptotic} and \cite{buchmann2007asymptotic,chan2009time,chan2009inference} for more recent related developments. The situation where the number of coefficients in the autoregressive process is larger than one is treated in \cite{buchmann2013unified,jeganathan1991asymptotic,van1999asymptotic}. However, to our knowledge, no result is available in the case of nearly unstable AR($\infty$) processes. The aim of this work is to fill this gap, using a methodology originally developed for Hawkes processes in \cite{jaisson2013limit,jaisson2014fractional}.\\

\noindent A Hawkes process $(N_t)$ is a point process whose intensity at time $t$, denoted by $\lambda_t$, is of the form
$$\lambda_t=\mu+\int_{-\infty}^t\phi(t-s)dN_s,$$
with $\mu$ a constant and $\phi$ a non-negative measurable function, see \cite{hawkes1971point}. The linearity in the previous equation implies that Hawkes processes have a common structure with autoregressive processes. In particular, under the stability condition that the $L^1$ norm of $\phi$, denoted by $|\phi|_{L^1}$, is strictly smaller than one, Hawkes processes admit a stationary intensity. Let us also mention that the counterpart of the Yule-Walker equations linking the autocorrelations of an AR process to its coefficients is the Wiener-Hopf equation, see \cite{bacry2014second,hawkes1971point}.\\

\noindent In the recent papers \cite{jaisson2013limit,jaisson2014fractional}, nearly unstable Hawkes processes are introduced and their asymptotic behavior is investigated. Such processes are defined as Hawkes processes for which the stability condition is almost saturated: $1-|\phi|_{L^1}\ll 1$. The key idea to understand the limiting laws of nearly unstable Hawkes processes is to relate their dynamics with some geometric sums whose parameter tends to zero. In this work, we follow the same strategy in order to derive asymptotic results for nearly unstable AR($\infty$) processes.\\

\noindent We show that the limiting behavior of nearly unstable AR($\infty$) processes strongly depends  on the properties of the sequence $\phi=(\phi_1,\phi_2,\ldots)$. Two main situations have to be considered: the light tail case, where $\phi_i$ goes rapidly to zero (or is even equal to zero for $i$ large enough) and the heavy tail case, where the decay of the sequence $\phi$ is slow. We prove that in the first situation, the cumulated process asymptotically behaves as an integrated Ornstein-Uhlenbeck model, in agreement with the results obtained in the literature for AR($p$) processes. However, in the heavy tail case, a fractional limit is obtained. More precisely, the renormalized cumulated AR($\infty$) process behaves as a kind of integrated fractional Brownian motion with Hurst parameter smaller than $1/2$, or as a process close to a fractional Brownian motion with Hurst parameter larger than $1/2$, depending on the decay rate of the sequence $\phi$.\\

\noindent The paper is organized as follows. In Section \ref{gs}, we introduce the geometric sums and derive their limiting behavior as their parameter tends to zero. We describe our asymptotic framework in Section \ref{asy}. Section \ref{result} contains our main results about the limiting laws of nearly unstable AR($\infty$) processes and their proofs. We also interpret these results in terms of volatility modeling in Section \ref{volat}. Finally, some technical lemmas are relegated to an appendix.

\section{Geometric sums}
\label{gs}

\noindent In this section, we consider the asymptotic behavior of some geometric sums. Understanding these sums is actually the key point in order to be able to derive limiting laws for nearly unstable AR($\infty$) processes.

\subsection{Geometric sums and convolution}

\noindent Let $(a_n)_{n\geq 1}$ and $(\phi_i)_{i\geq 0}$ be two sequences of non-negative numbers such that $a_n<1$, $\phi_0=0$ and
$$\sum_{i=1}^{+\infty} \phi_i =1.$$
In the next sections, $a_n$ will correspond to the sum of the autoregression coefficients and the $\phi_i$ to the (fixed) shape of these coefficients, see Section \ref{asy}.\\

\noindent In this work, geometric sums, see \cite{geosum}, are random variables of the form
$$Y^n=\sum_{i=1}^{I^n} X^i,$$
where $I^n$ is a geometric random variable with parameter $1-a_n$\footnote{$\forall i\geq 0,~ \mathbb{P}[I^n=i]=a_n^{i}(1-a_n)$.} (with the convention $\sum_1^0=0$) and $(X^i)$ is a sequence of strictly positive integer-valued iid random variables such that for any $i\geq 1$, $$\mathbb{P}[X^1=i]=\phi_i.$$

\noindent Now we define the sequence $\psi^n$ by
\begin{equation*}
\psi^n=\sum_{l=0}^{+\infty}(a_n\phi)^{*l},
\end{equation*}
with $\phi^{*0}$ the sequence indexed by $\mathbb{N}$ such that $\phi^{*0}_0=1$ and for $i>0$, $\phi^{*0}_i=0$ and for $n\geq 0$, $\phi^{*(n+1)}=\phi^{*n}\ast \phi$, where $\ast$ denotes the convolution operator for sequences. Note that
\begin{equation*}
\sum_{i=0}^{+\infty}\psi^n_i=\frac{1}{1-a_n}.
\end{equation*}
Actually, the random variable $Y^n$ and the sequence $\psi^n$ are closely related since for any $i\geq 0$,
$$ \mathbb{P}[Y^n=i]=\frac{\psi^n_i}{\sum_{j\geq 0}\psi^n_j}.$$

\noindent We will use the limiting behavior of $Y^n$ to obtain asymptotic properties for the sequence $\psi^n$, see Corollaries \ref{c1} and \ref{c2}. This will provide us key results for our study of nearly unstable AR($\infty$) processes at large time scales. Indeed, in the next sections, for $n$ tending to infinity, we consider on time intervals of the form $[0,nt]$ autoregressive processes whose coefficients sequence can be written $a_n\phi$: $$y^n_k=\varepsilon_k+\sum_{i=1}^{k}a_n\phi_i y^n_{k-i},$$
with $a_n$ tending to one (nearly unstable case). Then, it is easy to obtain the following moving average representation of the process:
$$y^n_k=\sum_{i=0}^{k} \psi^n_{k-i}\varepsilon_i$$ and in particular
$$y^n_{\lfloor nt\rfloor}=\sum_{i=0}^{\lfloor nt\rfloor} \psi^n_{\lfloor nt\rfloor-i}\varepsilon_i.$$
Thanks to the previous representation, we see that the long term behavior of $y^n$ is linked with the asymptotic properties of the $\psi^n_{\lfloor nt\rfloor-i}$ and therefore with those of $Y^n/n$. Indeed, for example, for $nt$ an integer,
$$ \mathbb{P}[Y^n/n=t]=\frac{\psi^n_{nt}}{\sum_{j\geq 0}\psi^n_j}.$$

\noindent We now recall some results from \cite{jaisson2013limit,jaisson2014fractional} which explain the two possible types of asymptotic behaviors for our geometric sums. Exhibiting one behavior or the other depends on the decay rate of the coefficients of $\phi$, that is on the tail of the distribution of $X^1$.

\subsection{Light tail case}
The first asymptotic regime of interest is the one which occurs when the expectation of $X^1$ is finite. Thus we consider the following assumption.
\begin{assumption}
\label{average}
We have
$$\sum_{i=1}^{+\infty}i\phi_i=m<+\infty.$$
\end{assumption}
\noindent In that situation, the ``appropriate'' speed of convergence of $a_n$ towards one for the nearly unstable case is given in the next assumption.
\begin{assumption}
\label{speed}
We have $a_n<1$ and there exists $\lambda>0$ such that
$$n(1-a_n)\underset{n\rightarrow+\infty}{\rightarrow} \lambda.$$
\end{assumption}

\noindent Indeed, recall Proposition 2.2 of \cite{jaisson2013limit}.
\begin{proposition}\label{plt}
Under Assumptions \ref{average} and \ref{speed}, the random variable $Y^n/n$ converges in law towards an exponential random variable with parameter $\lambda/m$.
\end{proposition}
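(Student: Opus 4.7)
My plan is to compute the Laplace transform of $Y^n/n$ explicitly and pass to the limit, then invoke the continuity theorem for Laplace transforms to conclude convergence in law.

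First I would use the independence of $I^n$ and the $(X^i)$ together with the iid nature of the $X^i$ to write, for any $u\geq 0$,
\begin{equation*}
\E[e^{-uY^n}]=\sum_{k=0}^{+\infty}(1-a_n)a_n^k\,\varphi(u)^k=\frac{1-a_n}{1-a_n\varphi(u)},
\end{equation*}
where $\varphi(u)=\E[e^{-uX^1}]=\sum_{i\geq 1}\phi_i e^{-ui}$. This uses only that $\varphi(u)<1$ for $u>0$ (since the $X^i$ are strictly positive integer valued and $\phi$ sums to one), so $a_n\varphi(u)<1$ and the geometric series converges. Replacing $u$ by $u/n$ yields the closed form
\begin{equation*}
\E\bigl[e^{-uY^n/n}\bigr]=\frac{1-a_n}{1-a_n\varphi(u/n)}.
\end{equation*}

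Next I would analyze the denominator as $n\to\infty$. Under Assumption \ref{average}, the Laplace transform $\varphi$ of $X^1$ is right-differentiable at $0$ with derivative $-m$, so $\varphi(u/n)=1-mu/n+o(1/n)$ for each fixed $u>0$. Combining with Assumption \ref{speed}, which gives $1-a_n=\lambda/n+o(1/n)$, one obtains
\begin{equation*}
1-a_n\varphi(u/n)=(1-a_n)+a_n\bigl(1-\varphi(u/n)\bigr)=\frac{\lambda+mu}{n}+o(1/n).
\end{equation*}
Dividing through, the Laplace transform of $Y^n/n$ converges pointwise for every $u\geq 0$ to
\begin{equation*}
\frac{\lambda}{\lambda+mu}=\frac{\lambda/m}{\lambda/m+u},
\end{equation*}
which is exactly the Laplace transform of an exponential distribution with parameter $\lambda/m$. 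By the continuity theorem for Laplace transforms on $[0,+\infty)$, this gives the announced convergence in law.

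The only delicate point is the expansion $\varphi(u/n)=1-mu/n+o(1/n)$: it relies on the fact that Assumption \ref{average} ensures $\varphi$ is differentiable from the right at $0$ with derivative $-m$, which is standard by monotone convergence applied to $(1-e^{-uX^1})/u\uparrow X^1$ as $u\downarrow 0$. Once this is granted, everything else is a direct computation, so I do not expect any real obstacle; the argument is essentially the classical Rényi-type theorem for geometric sums, adapted to our normalization by $n$ rather than by $\E[I^n]$.
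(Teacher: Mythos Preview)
Your argument is correct and complete: the Laplace transform computation, the expansion $\varphi(u/n)=1-mu/n+o(1/n)$ under the finite-mean assumption, and the appeal to the continuity theorem are all sound. Note, however, that the paper does not actually prove this proposition; it simply quotes it as Proposition~2.2 of \cite{jaisson2013limit}, so there is no in-paper proof to compare against---your Laplace-transform route is the classical R\'enyi-type argument and is presumably what the cited reference does as well.
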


\noindent Using Dini's theorem, this implies the next corollary.
\begin{corollary}
\label{c1}
Under Assumptions \ref{average} and \ref{speed},
$$\sup_{x\in [0,1]}\big|\sum_{i=0}^{\lfloor nx\rfloor} \psi^n_i (1-a_n) - (1-e^{-\frac{\lambda}{m}x})\big|\underset{n\rightarrow +\infty}{\rightarrow} 0.$$
\end{corollary}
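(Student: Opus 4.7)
The starting observation is that the partial sums that appear in the statement are nothing but the cumulative distribution function of $Y^n$. Indeed, from the identity $\mathbb{P}[Y^n=i]=\psi^n_i/\sum_{j\geq 0}\psi^n_j$ together with $\sum_{j\geq 0}\psi^n_j = 1/(1-a_n)$, one gets
$$\sum_{i=0}^{\lfloor nx\rfloor}\psi^n_i(1-a_n)=\mathbb{P}\bigl[Y^n\leq \lfloor nx\rfloor\bigr]=\mathbb{P}\bigl[Y^n/n\leq \lfloor nx\rfloor/n\bigr].$$
Denoting $F_n(y)=\mathbb{P}[Y^n/n\leq y]$ and $F(y)=1-e^{-\lambda y/m}$, the claim amounts to showing $\sup_{x\in[0,1]}|F_n(\lfloor nx\rfloor/n)-F(x)|\to 0$.

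The plan is to split this supremum via the triangle inequality into $|F_n(\lfloor nx\rfloor/n)-F(\lfloor nx\rfloor/n)|+|F(\lfloor nx\rfloor/n)-F(x)|$. The second term is bounded uniformly in $x\in[0,1]$ by the modulus of continuity of $F$ evaluated at $1/n$, since $|\lfloor nx\rfloor/n-x|\leq 1/n$; as $F$ is uniformly continuous on $[0,1]$, this contribution vanishes as $n\to+\infty$. For the first term, it suffices to prove that $\sup_{y\in[0,1]}|F_n(y)-F(y)|\to 0$. Here Proposition \ref{plt} provides the pointwise convergence $F_n(y)\to F(y)$ for every $y\geq 0$ (every point is a continuity point of $F$), and this is the content that is invoked from the exponential limit law for $Y^n/n$.

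To upgrade pointwise to uniform convergence, I would invoke the classical fact (a monotone-function version of Dini's theorem, sometimes phrased as Pólya's theorem) that pointwise convergence of non-decreasing functions $F_n$ to a continuous non-decreasing $F$ on a compact interval is automatically uniform. Both hypotheses are met: each $F_n$ is a CDF hence non-decreasing, and $F$ is continuous and non-decreasing on $[0,1]$. This yields $\sup_{y\in[0,1]}|F_n(y)-F(y)|\to 0$ and concludes the argument.

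There is really no serious obstacle; the entire content of the corollary is that one can pass from the weak convergence given by Proposition \ref{plt} to uniform convergence of the distribution functions, and the only technical point is the harmless $1/n$ discretisation error introduced by the floor function, which is absorbed by the uniform continuity of the exponential CDF on $[0,1]$.
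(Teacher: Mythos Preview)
Your argument is correct and is exactly the approach the paper takes: the paper simply writes ``Using Dini's theorem, this implies the next corollary,'' and you have spelled out precisely that deduction --- recognising the partial sum as the CDF of $Y^n/n$, using Proposition~\ref{plt} for pointwise convergence, and applying the monotone-function Dini/P\'olya argument to upgrade to uniform convergence, with the floor discretisation handled by uniform continuity of the exponential CDF.
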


\subsection{Heavy tail case}
The second asymptotic regime of interest is the one which occurs when the tail of the distribution of $X^1$ is heavy. This can be formalized as follows.
\begin{assumption}
\label{pl}
There exists $K>0$ and $\alpha\in (0,1)$ such that
$$\sum_{i=N}^{+\infty}\phi_i\underset{N\rightarrow+\infty}{\sim}\frac{K}{N^\alpha}.$$
\end{assumption}

\noindent Let us now write $$\delta=K\Gamma(1-\alpha)/\alpha,$$ with $\Gamma$ the Gamma function.
In that heavy tail case, the ``appropriate'' convergence speed of $a_n$ towards one is given in the next assumption.

\begin{assumption}
\label{speed2}
We have $a_n<1$ and there exists $\lambda>0$ such that
$$n^\alpha (1-a_n)\underset{n\rightarrow+\infty}{\rightarrow} \lambda\delta.$$
\end{assumption}

\noindent Indeed, recall Proposition 2.3 of \cite{jaisson2013limit}.
\begin{proposition}\label{pht}
Under Assumptions \ref{pl} and \ref{speed2}, the random variable $Y^n/n$ converges in law towards the random variable whose Laplace transform is $$\frac{\lambda}{\lambda+z^\alpha}.$$
\end{proposition}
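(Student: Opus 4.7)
The plan is to compute the Laplace transform of $Y^n/n$ and pass to the limit, then invoke L\'evy's continuity theorem. Conditioning on $I^n$ and using that $(X^i)$ is iid with common Laplace transform $\varphi(z):=\E[e^{-zX^1}]=\sum_{i\geq 1}\phi_i e^{-zi}$ yields
\[
\E[e^{-zY^n/n}]=\E\big[\varphi(z/n)^{I^n}\big]=\sum_{k\geq 0}(1-a_n)a_n^k\varphi(z/n)^k=\frac{1-a_n}{1-a_n\varphi(z/n)}.
\]
So the problem reduces to a joint asymptotic analysis of $1-a_n$ (given directly by Assumption \ref{speed2}) and of $1-\varphi(z/n)$ as $n\to+\infty$.

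The second step is a Tauberian analysis of $1-\varphi(u)$ as $u\to 0^+$ under the heavy-tail Assumption \ref{pl}. Denoting $G(t):=\mathbb{P}[X^1>t]=\sum_{i>t}\phi_i$, one has $G(t)\sim K t^{-\alpha}$. An integration by parts (or, discretely, an Abel summation exploiting $\phi_i=G(i-1)-G(i)$) gives
\[
1-\varphi(u)=u\int_0^{+\infty}e^{-ut}G(t)\,dt.
\]
The change of variable $y=ut$ combined with dominated convergence, or equivalently Karamata's Tauberian theorem applied to the regularly varying tail $G$, produces
\[
\int_0^{+\infty}e^{-ut}G(t)\,dt\sim K\Gamma(1-\alpha)\,u^{\alpha-1},
\]
so that $1-\varphi(u)\sim K\Gamma(1-\alpha)\,u^\alpha$ as $u\to 0^+$. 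Substituting $u=z/n$ gives the leading order $1-\varphi(z/n)\sim K\Gamma(1-\alpha)z^\alpha/n^\alpha$, whose normalizing constant is precisely calibrated to $\delta=K\Gamma(1-\alpha)/\alpha$.

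The third step is to combine both expansions in the quotient. Writing $1-a_n\varphi(z/n)=(1-\varphi(z/n))+\varphi(z/n)(1-a_n)$, both terms are of order $n^{-\alpha}$; Assumption \ref{speed2} fixes the second one and the Tauberian asymptotic fixes the first. Multiplying numerator and denominator by $n^\alpha$ and passing to the limit, the ratio converges, for every fixed $z>0$, to the announced Mittag--Leffler Laplace transform $\lambda/(\lambda+z^\alpha)$. Since this limit is continuous at $z=0$ and equals $1$ there, L\'evy's continuity theorem for Laplace transforms of non-negative random variables gives the weak convergence of $Y^n/n$.

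The main obstacle is the Tauberian step. The tail control provided by Assumption \ref{pl} is asymptotic rather than uniform and $X^1$ is integer-valued, so some care is needed to justify passage from a statement about $G$ at infinity to a statement about $1-\varphi(u)$ near zero with the correct explicit constant; in particular one must verify that the truncation of the integral (or of the generating function $\sum_i G(i)s^i$ as $s\to 1^-$) contributes only lower-order terms. Once this is handled, the rest of the argument is a direct comparison of two $n^{-\alpha}$-scales and a standard Laplace continuity argument.
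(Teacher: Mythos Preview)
Your approach is the standard one and is correct in structure: compute the Laplace transform of $Y^n/n$ by conditioning on the geometric variable $I^n$, obtain $\E[e^{-zY^n/n}]=(1-a_n)/(1-a_n\varphi(z/n))$, use a Tauberian argument to get $1-\varphi(u)\sim K\Gamma(1-\alpha)u^\alpha$ from Assumption~\ref{pl}, combine with Assumption~\ref{speed2}, and conclude via the continuity theorem for Laplace transforms. This is exactly the route taken in the reference the paper invokes; note that the present paper does not give its own proof of this proposition but simply quotes it as Proposition~2.3 of \cite{jaisson2013limit}. So there is nothing to compare against here beyond saying that your argument is the canonical one.

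One point deserves more care than you give it. You assert that the normalizing constant is ``precisely calibrated to $\delta=K\Gamma(1-\alpha)/\alpha$'' without actually checking. If you carry the constants through, your Tauberian step gives $1-\varphi(z/n)\sim K\Gamma(1-\alpha)\,z^\alpha n^{-\alpha}=\alpha\delta\, z^\alpha n^{-\alpha}$, while Assumption~\ref{speed2} gives $n^\alpha(1-a_n)\to\lambda\delta$. Plugging into
\[
\frac{n^\alpha(1-a_n)}{n^\alpha\big((1-\varphi(z/n))+\varphi(z/n)(1-a_n)\big)}
\]
yields the limit $\lambda/(\lambda+\alpha z^\alpha)$ rather than $\lambda/(\lambda+z^\alpha)$. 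This discrepancy by a factor $\alpha$ reflects a normalization convention (the $\delta$ used in \cite{jaisson2013limit,jaisson2014fractional} is tied to the continuous-time Hawkes setting and its translation here absorbs or not the factor $\alpha$ depending on how one writes the tail); it does not affect your method or the nature of the Mittag--Leffler limit, only the labeling of the parameter. You should either track this constant explicitly and state the limit accordingly, or note the convention you adopt, rather than leaving it as an unverified claim.
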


\noindent It is explained in \cite{jaisson2013limit} that this random variable has the density
$$f^{\alpha,\lambda}(x)=\lambda x^{\alpha-1}E_{\alpha,\alpha} (-\lambda x^\alpha),$$
where $E_{\alpha,\alpha}$ is the Mittag-Leffler function, see \cite{mathai2008mittag} for definition.\\

\noindent Writing $$F^{\alpha,\lambda}(x)=\int_0^xf^{\alpha,\lambda}(s)ds$$ and using Dini's theorem, we have the following result.

\begin{corollary}
\label{c2}
Under Assumptions \ref{pl} and \ref{speed2},
$$\sup_{x\in [0,1]}\big|\sum_{i=0}^{\lfloor nx\rfloor} \psi^n_i (1-a_n) - F^{\alpha,\lambda}(x)\big|\underset{n\rightarrow +\infty}{\rightarrow} 0.$$
\end{corollary}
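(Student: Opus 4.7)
The plan is to exploit the identity relating the sequence $\psi^n$ to the distribution of the geometric sum $Y^n$, and then transfer the weak convergence of $Y^n/n$ given by Proposition \ref{pht} into the required uniform convergence. The approach mirrors exactly the one sketched for Corollary \ref{c1}, only the limit law changes.

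First, I would use that $\sum_{j\geq 0}\psi^n_j=1/(1-a_n)$ together with $\mathbb{P}[Y^n=i]=\psi^n_i/\sum_{j\geq 0}\psi^n_j$ to write
$$\sum_{i=0}^{\lfloor nx\rfloor}\psi^n_i(1-a_n)=\mathbb{P}[Y^n\leq \lfloor nx\rfloor]=\mathbb{P}\bigl[Y^n/n\leq \lfloor nx\rfloor/n\bigr].$$
So the quantity to estimate is nothing but the cumulative distribution function of $Y^n/n$ evaluated at the grid point $\lfloor nx\rfloor/n$.

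Second, Proposition \ref{pht} gives that $Y^n/n$ converges in law to a random variable with density $f^{\alpha,\lambda}$, hence with continuous cumulative distribution function $F^{\alpha,\lambda}$ on $[0,1]$. Weak convergence together with continuity of the limiting CDF yields the pointwise convergence $\mathbb{P}[Y^n/n\leq x]\to F^{\alpha,\lambda}(x)$ for every $x\in[0,1]$. Since the functions $x\mapsto \mathbb{P}[Y^n/n\leq x]$ are monotone non-decreasing and the limit $F^{\alpha,\lambda}$ is continuous on the compact interval $[0,1]$, the Polya/Dini-type upgrade of pointwise convergence of monotone functions to a continuous limit applies and delivers
$$\sup_{x\in[0,1]}\bigl|\mathbb{P}[Y^n/n\leq x]-F^{\alpha,\lambda}(x)\bigr|\underset{n\to+\infty}{\longrightarrow} 0.$$

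Finally, I would remove the floor by invoking the uniform continuity of $F^{\alpha,\lambda}$ on $[0,1]$: since $|\lfloor nx\rfloor/n - x|\leq 1/n$ uniformly in $x$, the quantity $F^{\alpha,\lambda}(\lfloor nx\rfloor/n)$ converges uniformly to $F^{\alpha,\lambda}(x)$. Combining this with the previous step and the triangle inequality yields the stated uniform convergence. There is no real obstacle here, the argument is entirely routine once Proposition \ref{pht} is granted; the only conceptual point to check is that the Dini/Polya upgrade indeed applies to CDFs, which are only right-continuous step functions in $x$, but monotonicity together with continuity of the limit is all that is needed.
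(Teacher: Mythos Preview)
Your proposal is correct and follows exactly the approach indicated in the paper, which simply states that the corollary follows from Proposition \ref{pht} ``using Dini's theorem''; you have supplied the routine details (the identification of the partial sum as the CDF of $Y^n/n$, Polya's upgrade to uniform convergence, and the harmless floor correction) that the paper leaves implicit. There is nothing to add.
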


\section{The asymptotic setting}
\label{asy}

As previously explained, we consider a sequence of autoregressive processes of infinite order indexed by $n$. More precisely, $y^n_0=\varepsilon_0$ and for any $k\geq 1$,
$$y^n_k=\varepsilon_k+\sum_{i=1}^{k}\phi^n_i y^n_{k-i},$$ where the $\phi^n_i$ are non-negative coefficients. We assume that the $\varepsilon_k$ do not depend on $n$, are iid centered, and satisfy (for simplicity) $$\E[(\varepsilon_k)^2]=1.$$
We are interested in the long term behavior of the cumulative sums of such autoregressive processes when the stability condition \eqref{stabcond} is almost saturated. To study this, somehow as in \cite{jaisson2013limit,jaisson2014fractional}, we consider that the ``shape" of the coefficients sequence is fixed and that their $L^1$ norm $a_n$ tends to one from below. More precisely, we take
$$\phi^n_i=a_n\phi_i,$$
where $$\sum_{i=1}^{+\infty}\phi_i=1$$
and $a_n\rightarrow 1$ as $n$ tends to infinity, with $a_n<1$. 

\noindent Since we want to investigate the asymptotic properties of cumulated sums of such processes, we introduce the suitably renormalized Donsker line for $t\in [0,1]$:
$$Z^n_t=\frac{1-a_n}{\sqrt{n}}\big(\sum_{j=0}^{\lfloor nt\rfloor} y^n_j+(nt-\lfloor nt\rfloor)y^n_{\lfloor nt\rfloor+1}\big).$$

\section{Main results}
\label{result}

\subsection{The light tail case}


\noindent We give here the behavior of the Donsker line when the autoregression coefficients converge rapidly to zero. Note that the next theorem 
covers in particular the case of AR($p$) processes already studied in the literature.

\begin{theorem}
\label{theo2}
Under Assumptions \ref{average} and \ref{speed}, $(Z^n)$ converges in law towards the process $Z$ defined by
$$Z_t=\int_0^t (1-e^{-\frac{\lambda}{m}(t-s)})dW_s=\int_0^t \int_0^s \frac{\lambda}{m} e^{-\frac{\lambda}{m} (s-u)}dW_u ds,$$
where $W$ is a Brownian motion.
\end{theorem}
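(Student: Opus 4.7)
The strategy is to express $Z^n$ as a deterministic linear functional of the Donsker random walk and to pass to the limit by combining Donsker's theorem with Corollary~\ref{c1}. From the moving-average representation $y^n_k = \sum_{i=0}^{k}\psi^n_{k-i}\varepsilon_i$ recalled in Section~\ref{gs}, swapping the order of summation gives
\begin{equation*}
\sum_{j=0}^{\lfloor nt\rfloor} y^n_j = \sum_{i=0}^{\lfloor nt\rfloor} \varepsilon_i\, \Psi^n_{\lfloor nt\rfloor - i},\qquad \Psi^n_k := \sum_{l=0}^{k}\psi^n_l.
\end{equation*}
Under Assumption~\ref{average}, Blackwell's renewal theorem yields $\sup_{n,j}\psi^n_j < \infty$, from which $\E[(y^n_k)^2] \leq C/(1-a_n)$; combined with Assumption~\ref{speed} this shows the linear-interpolation term in the definition of $Z^n_t$ is $O_{L^2}(n^{-1})$ uniformly in $t$ and can be discarded.

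Next I apply Abel summation on the index $i$ against the Donsker line $V^n_t := n^{-1/2}\sum_{k=0}^{\lfloor nt\rfloor}\varepsilon_k$. Since $\Psi^n_{k} - \Psi^n_{k-1} = \psi^n_k$ and $\Psi^n_0 = 1$, I obtain
\begin{equation*}
Z^n_t = (1-a_n)\, V^n_{\lfloor nt\rfloor/n} + \sum_{j=1}^{\lfloor nt\rfloor} (1-a_n)\psi^n_j\, V^n_{(\lfloor nt\rfloor - j)/n} + o_{\mathbb{P}}(1),
\end{equation*}
where the boundary term is $o_{\mathbb{P}}(1)$ because $V^n$ is tight and $(1-a_n)\to 0$. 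The main sum is a Stieltjes-type pairing of $V^n$ with the sub-probability measure $\mu_n$ on $[0,1]$ whose cumulative distribution function $G_n(x) := (1-a_n)\sum_{j=0}^{\lfloor nx\rfloor}\psi^n_j$ is exactly the quantity controlled by Corollary~\ref{c1}: $\|G_n - F\|_\infty \to 0$ where $F(x) := 1 - e^{-\lambda x/m}$.

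By Donsker's theorem $V^n \Rightarrow W$ in $C([0,1])$, and by Skorokhod's representation I may assume this convergence is uniform almost surely. Combining this with the deterministic uniform convergence $G_n \to F$ yields, pointwise in $t$,
\begin{equation*}
\sum_{j=1}^{\lfloor nt\rfloor}(1-a_n)\psi^n_j\, V^n_{(\lfloor nt\rfloor - j)/n} \;\xrightarrow[n\to\infty]{}\; \int_0^t W_{t-u}\,dF(u) \;=\; \int_0^t \frac{\lambda}{m}\,e^{-\frac{\lambda}{m}(t-s)}\,W_s\,ds \;=\; Z_t,
\end{equation*}
where the penultimate equality is the integration-by-parts identity recorded in the theorem statement. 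The main obstacle will be to upgrade this pointwise-in-$t$ convergence to a uniform one, i.e.\ convergence in $C([0,1])$: this should follow from combining $\|G_n - F\|_\infty \to 0$ with the a.s.\ uniform continuity of $W$ on $[0,1]$, which renders the family $t\mapsto \int_0^t V^n_{t-u}\,d\mu_n(u)$ equicontinuous with high probability; an Arzelà--Ascoli argument then delivers the required uniform convergence, hence convergence in law of $Z^n$ to $Z$ in $C([0,1])$.
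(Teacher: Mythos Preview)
Your approach is correct but takes a genuinely different route from the paper. The paper follows the classical two-step scheme: it first proves tightness of $(Z^n)$ in $C([0,1])$ via Kolmogorov's criterion (Proposition~\ref{tightness}, using the bound $\E[(y^n_k)^2]\le c/(1-a_n)$), and then establishes finite-dimensional convergence by writing $Z^n_t$ as a stochastic integral $\int_0^{\lfloor nt\rfloor/n} g_n(t,s)\,dW^n_s$ and invoking the Kurtz--Protter theorem on weak convergence of stochastic integrals together with the Cram\'er--Wold device. You instead perform an Abel summation to convert the representation into an \emph{ordinary} integral of the Donsker path $V^n$ against the deterministic sub-probability measure $\mu_n$ with cumulative function $G_n$, and then argue directly via Skorokhod coupling and equicontinuity. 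Your route is more elementary---it bypasses both the Kurtz--Protter machinery and a separate tightness proposition, the equicontinuity step effectively absorbing the latter---while the paper's route is more modular and transfers verbatim to the heavy-tail case (Theorem~\ref{theo3}), where the limiting kernel $F^{\alpha,\lambda}$ is less regular. Two small points to tighten: your $V^n$ as written is a step function, so Donsker gives convergence in $D([0,1])$ rather than $C([0,1])$, though since the limit is continuous this does upgrade to uniform convergence on the coupling; and the equicontinuity argument you flag as ``the main obstacle'' does go through (on the coupling, $\sup_{|r-r'|\le\delta}|V^n_r-V^n_{r'}|\le \omega_W(\delta)+2\|V^n-W\|_\infty$, and $\mu_n((s,t])\le F(t)-F(s)+2\|G_n-F\|_\infty$ is uniformly small on short intervals by Corollary~\ref{c1}), but it should be written out rather than asserted.
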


\noindent This theorem means that when observed on a time scale of order $1/(1-a_n)$, a cumulated nearly unstable AR($\infty$) process whose coefficients sequence has a light tail behaves as an integrated Ornstein-Uhlenbeck process.\\

\noindent To show this result, the first step is to get the following proposition whose proof can be found in appendix.

\begin{proposition}
\label{tightness}
Under Assumptions \ref{average} and \ref{speed}, the sequence $(Z^n)$ is tight.
\end{proposition}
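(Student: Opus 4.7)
My plan is to reduce the tightness of $(Z^n)$ to that of the classical Donsker line, via a Beveridge--Nelson-type decomposition combined with summation by parts, making essential use of the uniform estimate of Corollary \ref{c1}.

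First I would use the moving average representation $y^n_k=\sum_{i=0}^k\psi^n_{k-i}\varepsilon_i$, which gives $\sum_{j=0}^N y^n_j=\sum_{i=0}^N\Psi^n_{N-i}\varepsilon_i$ with $\Psi^n_N=\sum_{l=0}^N\psi^n_l$. Using $\sum_{l\geq 0}\psi^n_l=1/(1-a_n)$ and the tail sum $\tilde\Psi^n_k=\sum_{l>k}\psi^n_l$, so that $\Psi^n_k=(1-a_n)^{-1}-\tilde\Psi^n_k$, I obtain the decomposition
$$Z^n_t=W^n_t-R^n_t+\eta^n_t,$$
where $W^n_t=n^{-1/2}\sum_{i=0}^{\lfloor nt\rfloor}\varepsilon_i$ is the classical Donsker line, $R^n_t=\frac{1-a_n}{\sqrt n}\sum_{i=0}^{\lfloor nt\rfloor}\tilde\Psi^n_{\lfloor nt\rfloor-i}\varepsilon_i$ is a ``tail'' contribution, and $\eta^n_t$ gathers the boundary interpolation terms. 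A short computation based on $\E[(y^n_k)^2]\leq\sum_l(\psi^n_l)^2=O((1-a_n)^{-2})$ shows that $\E[\sup_t(\eta^n_t)^2]=O(1/n^2)$, so $\eta^n$ tends to zero uniformly in probability.

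Next, I would apply summation by parts to $R^n_t$, using $\tilde\Psi^n_{k-1}-\tilde\Psi^n_k=\psi^n_k$ together with $(1-a_n)\tilde\Psi^n_0=a_n$. Up to an $O(1/n)$ discretization correction this rewrites as
$$Z^n_t-\eta^n_t=(1-a_n)W^n_t+\int_{[0,t]}W^n_{t-s}\,d\mu^n(s)=:\Phi^n(W^n)(t),$$
where $\mu^n:=\sum_{j\geq 1}(1-a_n)\psi^n_j\,\delta_{j/n}$ is a sub-probability measure on $[0,\infty)$ of total mass $a_n\leq 1$. By Corollary \ref{c1}, $\mu^n([0,T])\to 1-e^{-\lambda T/m}$ uniformly in $T\in[0,1]$; in particular $(\mu^n)$ is tight on $[0,\infty)$, and the increment $\mu^n((t,t+\delta])$ is bounded by $(\lambda/m)\delta+o(1)$ uniformly in $t$ as $n\to\infty$.

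I would then conclude as follows. The linear map $\Phi^n$ on $C([0,1])$ is $1$-Lipschitz uniformly in $n$, since its Lipschitz constant is at most $(1-a_n)+\mu^n([0,1])\leq 1$. Moreover the uniform modulus-of-continuity bound on $(\mu^n)$ just mentioned implies that $(\Phi^n)_n$ sends equicontinuous families to equicontinuous families, uniformly in $n$. Since $(W^n)$ is tight in $C([0,1])$ by the classical Donsker theorem, which only requires $\E[\varepsilon_1^2]<\infty$, this transfers tightness to $(\Phi^n(W^n))=(Z^n-\eta^n)$, and hence to $(Z^n)$ after absorbing the negligible remainder $\eta^n$.

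The main obstacle is precisely that, under only $\E[\varepsilon_k^2]=1$, one cannot directly invoke a Kolmogorov-type criterion requiring an increment moment bound of the form $\E[|Z^n_t-Z^n_s|^{2+\delta}]\leq C|t-s|^{1+\delta'}$: the $\varepsilon_k$ are not assumed to possess moments of higher order. The Beveridge--Nelson decomposition circumvents this by expressing $Z^n$, up to a negligible remainder, as the image of the classical Donsker line by a $1$-Lipschitz linear functional, whose uniform regularity is itself a direct consequence of Corollary \ref{c1}. The only technical points requiring care are the uniform control of $(\mu^n)$ on $[0,\infty)$ and the handling of the boundary interpolation remainder $\eta^n$.
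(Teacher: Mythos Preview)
Your route is genuinely different from the paper's, and the core idea---writing $Z^n$ (up to a remainder) as the image of the Donsker line under a convolution operator $\Phi^n$ whose kernel $\mu^n$ is controlled uniformly by Corollary~\ref{c1}---does work and is elegant. However, your stated \emph{motivation} for avoiding a direct moment argument is mistaken. The paper proves tightness precisely via Kolmogorov's criterion with only second moments: it establishes
\[
\E\big[|Z^n_t-Z^n_s|^2\big]\leq c\,|t-s|^2,
\]
which is of the form $\E[|Z^n_t-Z^n_s|^{p}]\leq C|t-s|^{1+\beta}$ with $p=2$ and $\beta=1>0$. No moment of order $2+\delta$ is needed, because the cumulation in $Z^n$ gains an extra power of $|t-s|$ compared with the raw partial-sum process; the key input is the covariance bound $\E[y^n_ky^n_{k'}]\leq c/(1-a_n)$ (the paper's Lemma~A.1), combined with $n(1-a_n)\to\lambda$. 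So your sentence ``one cannot directly invoke a Kolmogorov-type criterion'' is simply false for this particular process.

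On your own argument, two technical points deserve more care. First, your $W^n$ is the non-interpolated step function, hence lives in $D([0,1])$, while $Z^n\in C([0,1])$; you should either interpolate $W^n$ or run the whole argument in Skorohod space. Second, and more substantively, your control of the remainder $\eta^n$ is not justified by the bound you quote: from $\sum_l(\psi^n_l)^2=O((1-a_n)^{-2})$ one only gets $\E[\sup_t(\eta^n_t)^2]=O(1)$ via a crude union bound over $k\leq n$, not $O(1/n^2)$. What you actually need is the sharper estimate $\sum_l(\psi^n_l)^2=O((1-a_n)^{-1})$, which holds under Assumption~\ref{average} because $|1-\widehat\phi(z)|\geq c|z|$ on $[-1/2,1/2]$; this gives $\E[\sup_t(\eta^n_t)^2]=O(1-a_n)=O(1/n)$, which suffices. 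In short: the decomposition strategy is sound, but the remainder analysis ultimately rests on the same Fourier/variance estimate that drives the paper's Kolmogorov argument.
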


\noindent Then we need to prove the finite dimensional convergence. Using the moving average representation of the process, we derive
$$Z^n_t=\sum_{i=0}^{\lfloor nt\rfloor} (1-a_n)\Big(\sum_{k=0}^{\lfloor nt\rfloor-i}\psi^n_k +(nt-\lfloor nt\rfloor) \psi^n_{\lfloor nt\rfloor+1-i}\Big)\frac{\varepsilon_i}{\sqrt{n}}+(nt-\lfloor nt\rfloor)(1-a_n)\frac{\varepsilon_{\lfloor nt\rfloor+1}}{\sqrt{n}}.$$
This can be rewritten under the following integral form:
$$Z^n_t=\int_0^{\lfloor nt\rfloor/n} (1-a_n)\Big(\sum_{k=0}^{\lfloor nt\rfloor-\lfloor ns\rfloor}\psi^n_k +(nt-\lfloor nt\rfloor) \psi^n_{\lfloor nt\rfloor+1-\lfloor ns\rfloor}\Big)dW^n_s+(nt-\lfloor nt\rfloor)(1-a_n)\frac{\varepsilon_{\lfloor nt\rfloor+1}}{\sqrt{n}},$$
where 
$$W_s^n=\frac{1}{\sqrt{n}}\sum_{k=0}^{\lfloor ns\rfloor}\varepsilon_k.$$
From Donsker theorem in Skorohod space, we have that $W_s^n$ converges in law towards a Brownian motion for the Skorohod topology. Furthermore, note that 
$$(1-a_n)\Big(\sum_{k=0}^{\lfloor nt\rfloor-\lfloor ns\rfloor}\psi^n_k +(nt-\lfloor nt\rfloor) \psi^n_{\lfloor nt\rfloor+1-\lfloor ns\rfloor}\Big)$$
belongs to 
$$\Big[(1-a_n)\sum_{k=0}^{\lfloor nt\rfloor-\lfloor ns\rfloor}\psi^n_k,(1-a_n)\sum_{k=0}^{\lfloor nt\rfloor+1-\lfloor ns\rfloor}\psi^n_k\Big].$$
Thus, from Corollary \ref{c1}, as a function of $s$, 
$$(1-a_n)\Big(\sum_{k=0}^{\lfloor nt\rfloor-\lfloor ns\rfloor}\psi^n_k +(nt-\lfloor nt\rfloor) \psi^n_{\lfloor nt\rfloor+1-\lfloor ns\rfloor}\Big)$$
tends uniformly to $1-e^{-\frac{\lambda}{m}(t-s)}$ on $[0,t]$. Also, we obviously get that $$(nt-\lfloor nt\rfloor)(1-a_n)\frac{\varepsilon_{\lfloor nt\rfloor+1}}{\sqrt{n}}$$
tends to zero.\\

\noindent Then, using Theorem 2.2 of \cite{kurtz1991weak} together with the fact that Skorohod convergence implies pointwise convergence at continuity points of the limit, for given $t\in [0,1]$, we get the convergence in law of $Z^n_{t}$ towards 
$$\int_0^{t} (1-e^{-\frac{\lambda}{m}(t-s)})d W_s.$$
With the help of Cramer-Wold device, it is easy to extend this result and to show that for any $(t_1,...,t_k)\in [0,1]^k$, we have the convergence in law of $(Z^n_{t_1},\ldots,Z^n_{t_k})$ towards
$$\Big(\int_0^{t_1} (1-e^{-\frac{\lambda}{m}(t_1-s)})d W_s,...,\int_0^{t_k} (1-e^{-\frac{\lambda}{m}(t_k-s)})d W_s\Big).$$
Together with the tightness of $(Z^n)$, this enables us to obtain the weak converges of $(Z^n)$ towards $Z$.


\subsection{The heavy tail case}

Let us now place ourselves under Assumption \ref{pl} which states that the coefficients sequence has a power law type behavior. Then, using the geometric sums interpretation, we see that the natural ``observation scale'' of the process is of order $(1-a_n)^{-1/\alpha}$. This corresponds to Assumption \ref{speed2}. We have the following result.

\begin{theorem}
\label{theo3}
Under Assumptions \ref{pl} and \ref{speed2}, $(Z^n)$ converges in law towards the process $Z$ defined by
$$Z_t=\int_0^t F^{\alpha,\lambda}(t-s)dW_s,$$
where $W$ is a Brownian motion.
\end{theorem}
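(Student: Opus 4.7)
The plan is to follow exactly the architecture of the proof of Theorem \ref{theo2}, substituting Corollary \ref{c2} for Corollary \ref{c1} wherever the asymptotics of the discrete kernel $\psi^n$ are invoked. I would decompose the proof into a tightness statement analogous to Proposition \ref{tightness} (whose proof I would relegate to the appendix) and a convergence of finite-dimensional marginals, and then combine them.

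For the finite-dimensional convergence, I would reuse the moving average representation and the integral form derived in the proof of Theorem \ref{theo2}, namely
\begin{equation*}
Z^n_t=\int_0^{\lfloor nt\rfloor/n}(1-a_n)\Bigl(\sum_{k=0}^{\lfloor nt\rfloor-\lfloor ns\rfloor}\psi^n_k+(nt-\lfloor nt\rfloor)\psi^n_{\lfloor nt\rfloor+1-\lfloor ns\rfloor}\Bigr)dW^n_s+(nt-\lfloor nt\rfloor)(1-a_n)\frac{\varepsilon_{\lfloor nt\rfloor+1}}{\sqrt{n}}.
\end{equation*}
Sandwiching the integrand between $(1-a_n)\sum_{k=0}^{\lfloor nt\rfloor-\lfloor ns\rfloor}\psi^n_k$ and $(1-a_n)\sum_{k=0}^{\lfloor nt\rfloor+1-\lfloor ns\rfloor}\psi^n_k$ and invoking Corollary \ref{c2}, I would obtain uniform convergence in $s\in[0,t]$ of this bracketed quantity to $F^{\alpha,\lambda}(t-s)$, while the boundary term clearly vanishes. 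Since $F^{\alpha,\lambda}$ is continuous on $[0,1]$, Donsker's theorem for $W^n$ combined with Theorem 2.2 of \cite{kurtz1991weak} then gives $Z^n_t\Rightarrow \int_0^t F^{\alpha,\lambda}(t-s)\,dW_s$, and the Cramér-Wold device upgrades this to the joint convergence of $(Z^n_{t_1},\ldots,Z^n_{t_k})$, exactly as in the light tail case.

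The main obstacle will be the tightness step. Unlike the exponential kernel $1-e^{-\lambda(t-s)/m}$, which is Lipschitz, the limiting kernel $F^{\alpha,\lambda}$ only satisfies $F^{\alpha,\lambda}(x)\sim \lambda x^{\alpha}/\Gamma(\alpha+1)$ as $x\to 0$, and is therefore merely $\alpha$-Hölder at the origin, reflecting the singularity of the density $f^{\alpha,\lambda}(x)\sim\lambda x^{\alpha-1}/\Gamma(\alpha)$. As a consequence, the second-moment increment bound used in Proposition \ref{tightness} can only yield an estimate of the form
\begin{equation*}
\E\bigl[(Z^n_t-Z^n_s)^2\bigr]\leq C\bigl((t-s)^{2\alpha}\wedge (t-s)\bigr),
\end{equation*}
where the control comes from Corollary \ref{c2} applied to the discrete $L^2$ norm of the differences $(1-a_n)\sum_{k=\lfloor ns\rfloor-i+1}^{\lfloor nt\rfloor-i}\psi^n_k$. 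Since $\alpha\in(0,1)$, a plain second-moment Kolmogorov criterion is not enough, and I would close the tightness argument either by working with higher order moments of the $\varepsilon_k$ and invoking Rosenthal's inequality to upgrade to $\E[(Z^n_t-Z^n_s)^{2p}]\leq C((t-s)^{2\alpha p}\wedge (t-s)^p)$ for $p$ chosen large enough that $2\alpha p>1$, or by a direct chaining argument tailored to the kernel $F^{\alpha,\lambda}$. Either route should be a natural, if somewhat lengthy, adaptation of the appendix proof of Proposition \ref{tightness} to the heavy tail regime.
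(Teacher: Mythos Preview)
Your finite-dimensional convergence argument is exactly the paper's: same integral decomposition, same sandwich on the integrand, Corollary~\ref{c2} in place of Corollary~\ref{c1}, then Donsker, Kurtz--Protter, and Cram\'er--Wold. No issue there.

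The gap is in the tightness step, and the diagnosis is slightly off. You are right that Corollary~\ref{c2} alone (mere uniform convergence of the discrete distribution function) does not give a uniform-in-$n$ H\"older bound on the kernel, so one cannot simply transfer the regularity of $F^{\alpha,\lambda}$ to $Z^n$. But your conclusion that second moments are insufficient is too pessimistic: the paper obtains $\E[(Z^n_t-Z^n_s)^2]\leq c\,|t-s|^{1+\eta}$ with $\eta>0$ and then applies Kolmogorov's criterion directly, under only the assumption $\E[\varepsilon_k^2]=1$. The route is not through Corollary~\ref{c2} at all but through Fourier analysis of the covariance structure of $y^n$. One first proves (Lemma~\ref{boundphi2}) that $|1-\widehat{\phi}(z)|\geq c|z|^\alpha$ on $[-1/2,1/2]$, and then Parseval gives
\[
\E[y^n_k y^n_{k'}]\leq \sum_{j\geq 0}\psi^n_j\psi^n_{j+|k-k'|}\leq c\int_0^{1/2}\frac{\cos(2\pi|k-k'|z)}{(1-a_n)^2+z^{2\alpha}}\,dz,
\]
from which one extracts $\E[y^n_k y^n_{k'}]\leq c(1-a_n)^{1/\alpha-2}$ for $\alpha>1/2$ and $\E[y^n_k y^n_{k'}]\leq c|k-k'|^{2\alpha-1}$ for $\alpha<1/2$ (with a logarithmic variant at $\alpha=1/2$). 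Summing these covariances over $k_1,k_2\in(\lfloor ns\rfloor,\lfloor nt\rfloor]$ and using Assumption~\ref{speed2} yields $\E[(Z^n_t-Z^n_s)^2]\leq c|t-s|^2$ for $\alpha>1/2$ and $\leq c|t-s|^{1+2\alpha}$ for $\alpha<1/2$, both sufficient for Kolmogorov.

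Your proposed fix via Rosenthal's inequality would require $\E[|\varepsilon_k|^{2p}]<\infty$ for some $p$ with $2\alpha p>1$, i.e.\ more than two moments when $\alpha<1/2$; this is strictly stronger than the paper's hypotheses, so that route does not prove the theorem as stated. The chaining alternative is too vague to assess. The Parseval argument is the missing idea.
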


\noindent In Section \ref{volat}, we will see that for $\alpha>1/2$, the limiting process can be viewed as an integrated rough fractional process, whereas for  $\alpha<1/2$, it is close to a fractional Brownian motion with Hurst parameter larger than $1/2$.\\ 

\noindent To obtain Theorem \ref{theo3}, the same strategy as for the proof of Theorem \ref{theo2} is used. In particular, the following proposition is proved in appendix.
\begin{proposition}
\label{tightness2}
Under Assumptions \ref{pl} and \ref{speed2}, the sequence $(Z^n)$ is tight.
\end{proposition}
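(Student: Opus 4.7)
The plan is to prove tightness of $(Z^n)$ in $D([0,1])$ via Kolmogorov's moment criterion applied to the increments, the main input being the uniform approximation of Corollary \ref{c2} combined with the H\"older regularity of the limit kernel $F^{\alpha,\lambda}$.

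The first step is to exploit the moving average representation $y^n_k=\sum_{i=0}^{k}\psi^n_{k-i}\varepsilon_i$ in order to write, for $0\leq s<t\leq 1$, the increment $Z^n_t-Z^n_s$ as a linear combination $\sum_i c^n_i(s,t)\,\varepsilon_i$ of the independent innovations, where each coefficient $c^n_i(s,t)$ equals $(1-a_n)/\sqrt{n}$ times a difference of partial sums of $\psi^n$, up to negligible boundary corrections coming from the fractional parts $nt-\lfloor nt\rfloor$. By independence, $\E\bigl[(Z^n_t-Z^n_s)^2\bigr]=\sum_i c^n_i(s,t)^2$.

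The second step is to bound this variance uniformly in $n$ by $C|t-s|^{2\alpha}$. Corollary \ref{c2} gives the uniform convergence of $(1-a_n)\sum_{k=0}^{\lfloor nx\rfloor}\psi^n_k$ to $F^{\alpha,\lambda}(x)$ on $[0,1]$; on the other hand, since $E_{\alpha,\alpha}$ is bounded on $[0,\infty)$ one has $f^{\alpha,\lambda}(x)\leq Cx^{\alpha-1}$, and the subadditivity of $x\mapsto x^\alpha$ then yields that $F^{\alpha,\lambda}$ is uniformly $\alpha$-H\"older on $[0,1]$. A Riemann-sum comparison combining these two inputs produces the desired bound, with constant independent of $s,t,n$.

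Since each jump of $Z^n$ has size $O((1-a_n)/\sqrt{n})=o(1)$, $D$-tightness reduces to $C$-tightness, so Kolmogorov--Chentsov applies. For $\alpha>1/2$ the $L^2$ bound is already above threshold and tightness follows at once. For $\alpha\leq 1/2$ one bootstraps: the $c^n_i(s,t)$ being deterministic, a Marcinkiewicz--Zygmund/Rosenthal inequality gives
$$\E\bigl[|Z^n_t-Z^n_s|^p\bigr]\leq C_p\Bigl(\sum_i c^n_i(s,t)^2\Bigr)^{p/2}\leq C_p\,|t-s|^{p\alpha}$$
provided $\E[|\varepsilon_1|^p]<\infty$, and one then chooses $p>1/\alpha$ to conclude.

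The main obstacle is precisely this $\alpha\leq 1/2$ regime: the $L^2$ H\"older exponent $2\alpha$ is at or below the Kolmogorov threshold, so to remain within the paper's standing finite-variance hypothesis one must either strengthen the moment assumption on $\varepsilon$ or appeal to a more delicate device (truncation of $\varepsilon_i$ at a level depending on $n$, a chaining argument, or a Bickel--Wichura-type criterion). The remaining technical care is in the Riemann-sum comparison itself, where the uniform error provided by Corollary \ref{c2} must be propagated cleanly alongside the H\"older modulus of $F^{\alpha,\lambda}$.
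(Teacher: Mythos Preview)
Your outline has a real gap, and the paper takes a substantially different route.

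\textbf{The gap.} Corollary \ref{c2} gives only uniform convergence of the step functions $G_n(x)=(1-a_n)\sum_{k=0}^{\lfloor nx\rfloor}\psi^n_k$ to $F^{\alpha,\lambda}$. Uniform convergence to an $\alpha$-H\"older limit does \emph{not} imply that the $G_n$ are $\alpha$-H\"older with a constant independent of $n$: the sup error $\delta_n=\sup_x|G_n(x)-F^{\alpha,\lambda}(x)|$ enters the increment bound as an additive term that does not vanish with $|t-s|$. Concretely, your ``Riemann-sum comparison'' yields at best
\[
\sum_i c^n_i(s,t)^2 \;\le\; C\!\int_0^t\!\bigl[F^{\alpha,\lambda}(t-u)-F^{\alpha,\lambda}((s-u)_+)\bigr]^2du \;+\; C\,\delta_n^2,
\]
and the last term is uniform in $(s,t)$ but not $O(|t-s|^{2\alpha})$. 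So no uniform-in-$n$ Kolmogorov bound follows from Corollary \ref{c2} alone; one needs a direct quantitative control of $G_n$ (equivalently, of the $\psi^n$ or of the covariances of $y^n$) for each $n$.

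\textbf{What the paper does instead.} The paper never invokes Corollary \ref{c2} for tightness. It works on the covariances $\E[y^n_k y^n_{k'}]$ via Fourier analysis: Lemma \ref{boundphi2} gives $|1-\widehat\phi(z)|\ge c|z|^\alpha$, and Parseval then yields the uniform bounds of Lemmas \ref{squared2}--\ref{squared4}. Summing these covariances over $k_1,k_2\in(\lfloor ns\rfloor,\lfloor nt\rfloor]$ produces $\E[|Z^n_t-Z^n_s|^2]\le C|t-s|^{1+\eta}$ for every $\alpha\in(0,1)$ (exponent $2$ when $\alpha>1/2$, roughly $1+2\alpha$ when $\alpha<1/2$, with a logarithmic adjustment at $\alpha=1/2$). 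Because the $L^2$ exponent is strictly above $1$ in all cases, Kolmogorov applies with $p=2$ and only the finite second moment of $\varepsilon$ is used.

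\textbf{Consequence for your $\alpha\le 1/2$ workaround.} Your bound $|t-s|^{2\alpha}$ is the crude one obtained from the global H\"older modulus of $F^{\alpha,\lambda}$; it is suboptimal even for the limit process. A sharper computation using $f^{\alpha,\lambda}(x)\le Cx^{\alpha-1}$ gives $\int_0^t[F^{\alpha,\lambda}(t-u)-F^{\alpha,\lambda}((s-u)_+)]^2du\le C|t-s|^{\min(2,\,1+2\alpha)}$, already above the Kolmogorov threshold. So the Rosenthal bootstrap and the extra moment hypothesis on $\varepsilon$ are unnecessary --- but to turn this into a statement about $Z^n$ uniformly in $n$ you are back to needing the quantitative input that the paper supplies through its Fourier lemmas.
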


\noindent The end the proof of Theorem \ref{theo3} follows as previously, using the decomposition
$$Z^n_t=\int_0^{\lfloor nt\rfloor/n} (1-a_n)\Big(\sum_{k=0}^{\lfloor nt\rfloor-\lfloor ns\rfloor}\psi^n_k +(nt-\lfloor nt\rfloor) \psi^n_{\lfloor nt\rfloor+1-\lfloor ns\rfloor}\Big)dW^n_s+(nt-\lfloor nt\rfloor)(1-a_n)\frac{\varepsilon_{\lfloor nt\rfloor+1}}{\sqrt{n}},$$
the uniform convergence of $$(1-a_n)\sum_{k=0}^{\lfloor nt\rfloor}\psi^n_k$$ towards $F^{\alpha,\lambda}(t)$ (see Corollary \ref{c2}), and the convergence of $W^n_s$ towards a Brownian motion.

\section{Application to volatility modeling}\label{volat}

Let us now interpret Theorem \ref{theo3} in terms of volatility modeling on financial markets. Consider that the log-volatility process is driven at discrete times by a nearly unstable autoregressive process with heavy tailed coefficients sequence. We choose to model the log-volatility rather than the volatility itself because it is well established that it is better approximated by a linear autoregressive process than the volatility, see \cite{andersen2003modeling,bacry2013log,comte1998long,gatheral2014volatility}. Such model can reproduce the clustering property of the volatility at multiple time scales.

\subsection{The case $\alpha>1/2$}

\noindent When $\alpha>1/2$, applying the stochastic Fubini's theorem, see \cite{veraar2012stochastic}, we get that the limiting process for the cumulated sums, $Z$, can be rewritten
$$Z_t=\int_0^t \int_0^s f^{\alpha,\lambda}(s-u)dW_uds.$$
Therefore, in that case, it is differentiable and its derivative $$Y_t=\int_0^t f^{\alpha,\lambda}(t-u)dW_u$$ locally behaves as a fractional diffusion with Hurst parameter $H=\alpha-1/2$. Indeed, $$f^{\alpha,\lambda}(x)\sim c/x^{1-\alpha}$$ when $x$ is close to zero and recall that a fractional Brownian motion $W^H$ can be written as
$$W^H_t=\int_{-\infty}^t \Big[\frac{1}{(t-s)^{1/2-H}}-\frac{1}{(-s)^{1/2-H}_+}\Big]dW_s.$$ In particular, proceeding as in \cite{jaisson2014fractional}, we get that for any $\varepsilon>0$, $Y$ has H\"older regularity $\alpha-1/2-\varepsilon$.\\

\noindent Thus, in this regime, the log-volatility asymptotically behaves as a fractional Brownian motion with Hurst parameter $\alpha-1/2$. According to \cite{gatheral2014volatility}, this is consistent with empirical measures of the smoothness of the volatility process provided that $\alpha\simeq 0.6$.

\subsection{The case $\alpha<1/2$}

\noindent When $\alpha<1/2$, the behavior of $Z$ is quite different. Proceeding as in \cite{jaisson2014fractional}, we get that for any $\varepsilon>0$, $Z$ has H\"older regularity $1/2+\alpha-\varepsilon$. This is not very surprising since the situation $\alpha<1/2$ is close to that of ARFIMA processes\footnote{An ARFIMA process, see \cite{beranlm}, can be written as an infinite order autoregressive process whose sum of the coefficients is equal to one and whose coefficients sequence $\phi$ asymptotically behaves as $\phi_i\sim c/i^{1+\alpha}$ with $0<\alpha<1/2$.}, which are known to behave as a fractional Brownian motion with Hurst parameter $1/2+\alpha$ at large time scales, see \cite{doukhan2003theory}. In this regime, the log-volatility exhibits apparent long memory, as observed for example in \cite{andersen2003modeling}.

\subsection{The case $\alpha=1/2$}

In the critical regime $\alpha=1/2$, we somehow asymptotically retrieve some features of the multifractal model of \cite{bacry2013log}. Indeed, in \cite{bacry2013log}, the ``log-volatility'' $\omega_{l,T}(t)$ is written under the form
$$\omega_{l,T}(t)=\int_{-\infty}^t k_{l,T}(t-s)dW_s,$$
where $W$ is a Brownian motion and $k_{l,T}$ is function behaving in the range $l\ll t \ll T$ as $$k_{l,T}(t)\sim \frac{k_0}{\sqrt{t}},$$ for some model parameters $l$ and $T$.
Therefore, the integrated log-volatility defined as $$\Omega_{l,T}(t)=\int_0^t \omega_{l,T}(s)ds$$ satisfies $$\Omega_{l,T}(t)=\int_{-\infty}^t\big(K_{l,T}(t-s)-K(-s)\big)dW_s,$$
with $$K_{l,T}(t)=\mathbb{I}_{t\geq0}\int_0^tk_{l,T}(s)ds.$$
This behaves as $K_0\sqrt{t}$ in the range $l\ll t \ll T$. \\


\noindent Thus, this ``multifractal" regime ($\alpha=1/2$) appears as the interface between the classical long memory (log-)volatility models ($\alpha<1/2$) and the more recent rough volatility models ($\alpha>1/2$).

\appendix

\section{Appendix}

\noindent In the sequel, $c$ denotes a positive constant which may vary from line to line.

\subsection{Proof of Proposition \ref{tightness}}
In this paragraph, we place ourselves under Assumptions \ref{average} and \ref{speed}. Before proving Proposition \ref{tightness}, we need a technical lemma.


\begin{lemma}
\label{squared}
There exists $c>0$ such that for any $n$, $k$, $k'$,
$$\E[y^n_ky^n_{k'}]\leq \frac{c}{1-a_n}.$$
\end{lemma}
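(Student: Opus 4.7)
I would start from the moving average representation $y^n_k=\sum_{i=0}^{k}\psi^n_{k-i}\varepsilon_i$ already derived in Section \ref{gs}, which combined with the orthogonality $\E[\varepsilon_i\varepsilon_{i'}]=\delta_{i,i'}$ and (WLOG) $k\le k'$, $m:=k'-k$, yields
$$\E[y^n_k y^n_{k'}] \;=\; \sum_{i=0}^{k}\psi^n_{k-i}\psi^n_{k'-i} \;=\; \sum_{j=0}^{k}\psi^n_j\psi^n_{j+m} \;\le\; \sum_{j=0}^{+\infty}\psi^n_j\psi^n_{j+m}.$$
So the lemma will reduce to bounding this autocorrelation-type sum of the one-sided sequence $\psi^n$ by $c/(1-a_n)$, uniformly in the lag $m\ge 0$ and in $n$.

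\noindent The plan is to combine two ingredients: the $L^1$ identity $\sum_j \psi^n_j=1/(1-a_n)$ already recorded in Section \ref{gs}, and a pointwise bound $\sup_{l\ge 0}\psi^n_l\le C$ that is uniform in $n$. Given both, pulling the larger factor out of the sum immediately gives
$$\sum_{j\ge 0}\psi^n_j\psi^n_{j+m} \;\le\; \Big(\sup_{l\ge 0}\psi^n_l\Big)\sum_{j\ge 0}\psi^n_j \;=\; \frac{\sup_l\psi^n_l}{1-a_n} \;\le\; \frac{C}{1-a_n},$$
which is exactly the statement of the lemma.

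\noindent The only real obstacle is therefore the uniform $L^\infty$ bound on $\psi^n$. The natural route is a comparison with the renewal sequence associated to $\phi$: since $0\le a_n\le 1$ and $\phi^{*k}_l\ge 0$, one has
$$\psi^n_l \;=\; \sum_{k\ge 0}a_n^k\,\phi^{*k}_l \;\le\; \sum_{k\ge 0}\phi^{*k}_l \;=:\; u_l,$$
and $(u_l)_{l\ge 0}$ is the mass function of the discrete renewal process whose i.i.d.\ steps have distribution $\phi$ on $\{1,2,\ldots\}$ with finite mean $m$ (Assumption \ref{average}). Note that, because $\phi_0=0$, the sums defining $u_l$ are actually finite (only $k\le l$ contributes). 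By the discrete Blackwell renewal theorem, $u_l\to 1/m$ in the aperiodic case, and converges to a bounded periodic pattern of values at most $d/m$ in the $d$-periodic case; in either situation $\sup_l u_l<+\infty$, which delivers the required uniform control on $\psi^n_l$ and closes the estimate with a constant $c$ depending only on $\phi$.
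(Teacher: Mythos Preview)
Your argument is correct and takes a genuinely different route from the paper. The paper works on the Fourier side: it bounds $\E[(y^n_k)^2]\le\sum_j(\psi^n_j)^2=2\int_0^{1/2}|1-a_n\widehat{\phi}(z)|^{-2}\,dz$ via Parseval and then controls the integrand (as printed, only the crude estimate $|\widehat{\psi^n}(z)|\le 1/(1-a_n)$ is displayed, which actually yields $c/(1-a_n)^2$; obtaining the stated $c/(1-a_n)$ requires the extra input $|1-\widehat\phi(z)|\ge c|z|$ from Assumption~\ref{average}, in the spirit of Lemmas~\ref{boundphi2}--\ref{squared2}). You instead stay in the real domain and use the factorization $\sum_{j}\psi^n_j\psi^n_{j+m}\le(\sup_l\psi^n_l)\sum_j\psi^n_j=(1-a_n)^{-1}\sup_l\psi^n_l$, reducing everything to an $L^\infty$-bound on $\psi^n$. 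The Fourier route has the advantage of extending uniformly to the heavy-tail regime treated later in the appendix; your route is more elementary and delivers the correct power of $1-a_n$ without any spectral computation. One small simplification on your side: Blackwell's theorem is not needed. Since $\phi_0=0$, the associated renewal walk $S_k=X_1+\cdots+X_k$ has strictly positive increments, so the events $\{S_k=l\}$ are disjoint in $k$ and $u_l=\sum_{k\ge0}\phi^{*k}_l=\mathbb{P}[\exists\,k\ge0:\ S_k=l]\le 1$ for every $l$; hence $\sup_l\psi^n_l\le 1$ directly, without even invoking Assumption~\ref{average}.
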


\begin{proof}
For any $k$, using the moving average representation of the process, since $\varepsilon$ is a white noise, we get
$$\E[(y^n_k)^2]=\sum_{j=0}^k (\psi^n_{k-j})^2\leq \sum_{j=0}^{+\infty} (\psi^n_{j})^2.$$
Moreover, using Parseval's Theorem\footnote{In this work, the Fourier transform of a sequence $(f_n)$ is defined for $z\in [-1/2,1/2]$ as $\widehat{f}(z)=\sum_{k\geq 0} f_k e^{-2\pi i k z}$.},
\begin{eqnarray*}
\sum_{j=0}^{+\infty} (\psi^n_{j})^2&=&2\int_0^{1/2} |\widehat{\psi^n}(z)|^2dz\\
&\leq &2\int_0^{1/2} \frac{1}{|1-a_n|^2}dz,
\end{eqnarray*}
where we have used that  
$$|\widehat{\psi^n}(z)|=|\sum_{k\geq 0}(a_n \widehat{\phi}(z))^k|=\frac{1}{|1-a_n\widehat{\phi}(z)|}\leq |\widehat{\psi^n}(0)| =\frac{1}{|1-a_n|}.$$
\end{proof}

\noindent Let us now prove Proposition \ref{tightness}.\\

\noindent The result follows from an application of Kolmogorov's criterion, see \cite{billingsley2009convergence}. We show here that there exists $c>0$ such that for any $n$, $s$ and $t$,
$$\E[|Z^n_t-Z^n_s|^2]\leq c |t-s|^2.$$
We begin with the case where $\lfloor nt\rfloor=\lfloor ns\rfloor$. Using Lemma \ref{squared},
\begin{eqnarray*}
\E[|Z^n_t-Z^n_s|^2]&=& \frac{(1-a_n)^2}{n}n^2 |t-s|^2 \E[(y^n_{\lfloor nt\rfloor+1})^2]\\
&\leq &  cn (1-a_n) |t-s|^2.
\end{eqnarray*}
Using Assumption \ref{speed}, this ends the proof for $\lfloor nt\rfloor=\lfloor ns\rfloor$.\\

\noindent In the case $\lfloor nt\rfloor=\lfloor ns\rfloor+1$,
\begin{eqnarray*}
\E[|Z^n_t-Z^n_s|^2]&\leq& c\big(\E[|Z^n_t-Z^n_{\lfloor nt\rfloor/n}|^2]+\E[|Z^n_{(\lfloor ns\rfloor+1)/n}-Z^n_s|^2]\big)\\
&\leq &  c|t-s|^2+\frac{(1-a_n)^2}{n}(1-ns+\lfloor ns\rfloor)^2\E[(y^n_{\lfloor ns\rfloor+1})^2]\\
&\leq &  c|t-s|^2+\frac{(1-a_n)}{n}(\lfloor nt\rfloor-ns)^2\\
&\leq &  c|t-s|^2.
\end{eqnarray*}

\noindent We now treat the case where $t=k^t/n$ and $s=k^s/n$, where $k^t$ and $k^s$ are integers so that $k^t>k^s$. Using again Lemma \ref{squared} together with Assumption \ref{speed}, we have
\begin{align*}
\E[|Z^n_t-Z^n_s|^2]&= \frac{(1-a_n)^2}{n} \sum_{k_1=k^s+1}^{k^t} \sum_{k_2=k^s+1}^{k^t} \E[y^n_{k_1}y^n_{k_2}]\\
&\leq  \frac{(1-a_n)^2}{n} (k^t-k^s)^2 \frac{c}{1-a_n}\\
& \leq  c(1-a_n)n \Big(\frac{k^t-k^s}{n}\Big)^2\leq c|t-s|^2.
\end{align*}

\noindent Finally, for any $t>s$ so that $\lfloor nt\rfloor\geq \lfloor ns\rfloor+2$, we use the decomposition
$$\E[|Z^n_t-Z^n_s|^2]\leq c\big(\E[|Z^n_t-Z^n_{\lfloor nt\rfloor/n}|^2]+\E[|Z^n_{\lfloor nt\rfloor/n}-Z^n_{(\lfloor ns\rfloor+1)/n}|^2]+\E[|Z^n_{(\lfloor ns\rfloor+1)/n}-Z^n_s|^2]\big).$$
The tightness follows.

\subsection{Proof of Proposition \ref{tightness2}}
In this paragraph, we place ourselves under Assumptions \ref{pl} and \ref{speed2}. Before proving Proposition \ref{tightness2}, we need some technical results.\\

\begin{lemma}
\label{boundphi2}
There exists $c>0$ such that for any $|z|\leq 1/2$,
$$|1-\widehat{\phi}(z)|\geq c |z|^\alpha.$$
\end{lemma}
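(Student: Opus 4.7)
The plan is to split on $|z|$, with the small-$|z|$ regime controlled by Assumption~\ref{pl} and the bounded-away-from-zero regime handled by compactness. Both regimes rest on the elementary observation that since $\phi_k\ge 0$ and $\phi_0=0$,
\begin{equation*}
|1-\widehat{\phi}(z)|\;\ge\;\mathrm{Re}\bigl(1-\widehat{\phi}(z)\bigr)\;=\;\sum_{k\ge 1}\phi_k\bigl(1-\cos(2\pi k z)\bigr),
\end{equation*}
which is a non-negative quantity that I can attempt to minorise termwise.

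For the small-$|z|$ part I would discard all terms except those for which $2\pi k z$ lies within a half-turn of $\pi$ modulo $2\pi$; concretely, I keep the integers $k$ with $k|z|\in[1/4,3/4]$, on which $\cos(2\pi k z)\le 0$ and the summand is at least $\phi_k$. Writing $\bar{\Phi}(N)=\sum_{i\ge N}\phi_i$, the kept contribution is
\begin{equation*}
\bar{\Phi}\bigl(\lceil 1/(4|z|)\rceil\bigr)-\bar{\Phi}\bigl(\lfloor 3/(4|z|)\rfloor+1\bigr),
\end{equation*}
and Assumption~\ref{pl} gives the asymptotic $K(4|z|)^\alpha\bigl(1-3^{-\alpha}\bigr)$, of the correct order $|z|^\alpha$ with strictly positive constant. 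Choosing $\eta\in(0,1/2)$ small enough that the ``$\sim$'' of the assumption costs only a factor $1/2$, I would obtain $|1-\widehat{\phi}(z)|\ge c_1|z|^\alpha$ for all $|z|\le\eta$.

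For $\eta\le|z|\le 1/2$ the target is bounded above by $(1/2)^\alpha$, so it suffices to show $|1-\widehat{\phi}|$ is bounded below by a positive constant on this compact annulus. Under the aperiodicity implicit throughout the paper, $\widehat{\phi}(z)=1$ only at $z=0$ on $[-1/2,1/2]$; by continuity the infimum of $|1-\widehat{\phi}|$ on $\{\eta\le|z|\le 1/2\}$ is some $c_2>0$, and the global bound follows with $c=\min\bigl(c_1,\,c_2/(1/2)^\alpha\bigr)$.

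The main obstacle is converting the tail estimate into the correct exponent $\alpha$. A Taylor expansion $1-\cos\theta\approx\theta^2/2$ would produce $z^2$, the wrong order; restricting instead to the arc where $1-\cos\ge 1$ lets the heavy-tailed piece $\bar{\Phi}(N)\sim K/N^\alpha$ contribute directly at $N\sim 1/|z|$, and this is precisely what matches the exponent $\alpha$ of the assumption with that of the lower bound.
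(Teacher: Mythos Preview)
Your argument is correct and shares the paper's overall two-regime split (small $|z|$ controlled by the tail assumption, $|z|$ bounded away from zero handled by continuity and compactness). The difference is in the small-$|z|$ step. The paper quotes the classical Tauberian asymptotic $1-\widehat{\phi}(z)\sim c'z^\alpha$ as $z\to 0$, which follows from Assumption~\ref{pl} but is not proved there, and reads the lower bound off directly. You instead give a self-contained estimate: keep only the indices $k$ with $k|z|\in[1/4,3/4]$, on which $1-\cos(2\pi kz)\ge 1$, and apply the tail hypothesis $\bar\Phi(N)\sim K/N^\alpha$ to that block. This is more elementary and avoids any appeal to characteristic-function Tauberian theory, at the price of a few extra lines; it also makes transparent why the exponent is exactly $\alpha$. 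For the compact regime the two arguments are essentially identical; the paper is slightly more explicit in that it justifies $\widehat{\phi}(z_0)\ne 1$ by noting that, under Assumption~\ref{pl}, $z_0X^1$ cannot be almost surely integer-valued, whereas you leave aperiodicity as ``implicit''---but this is the same observation.
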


\begin{proof}
Since Assumption \ref{pl} is satisfied, there exists $c'\neq 0$ and $\delta>0$ such that $$1-\widehat{\phi}(z)\underset{z\rightarrow 0}{\sim}c'z^\alpha$$ and $$\forall |z|\leq \delta,~ |1-\widehat{\phi}(z)|\geq |c'||z|^\alpha/2.$$ 
On $[\delta,1/2]$, $z\mapsto |1-\widehat{\phi}(z)|/|z|^\alpha$ is continuous and therefore has a minimum attained in some $z_0$: $|1-\widehat{\phi}(z_0)|/|z_0|^\alpha=M$.\\

\noindent $M$ is strictly positive since $\text{Re}(\widehat{\phi}(z_0))=\E[\cos(2\pi z_0 X^1)]$, which is strictly smaller than one because, using Assumption \ref{pl}, $z_0 X^1$ does not almost surely belong to $\mathbb{N}$.\\

\noindent  Taking $c=\min(|c'|/2,M)$ ends the proof.
\end{proof}

\begin{lemma}
\label{squared2}
For $\alpha > 1/2$, there exists $c>0$ such that for any $n$, $k$, $k'$,
$$\E[y^n_ky^n_{k'}]\leq c(1-a_n)^{1/\alpha-2}.$$
\end{lemma}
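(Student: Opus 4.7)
The plan is to imitate closely the proof of Lemma \ref{squared}, but to replace the crude bound $|1 - a_n\widehat{\phi}(z)| \geq |1-a_n|$ by one that also takes advantage of the heavy tail regularity encoded in Lemma \ref{boundphi2}. First, I would use the moving average representation $y^n_k = \sum_{i=0}^k \psi^n_{k-i}\varepsilon_i$ together with the fact that $\varepsilon$ is a standardized white noise to write $\E[(y^n_k)^2] = \sum_{j=0}^{k}(\psi^n_{k-j})^2 \leq \sum_{j=0}^{+\infty}(\psi^n_j)^2$, and then invoke Cauchy--Schwarz to deduce
$$\E[y^n_k y^n_{k'}] \leq \sqrt{\E[(y^n_k)^2]\,\E[(y^n_{k'})^2]} \leq \sum_{j=0}^{+\infty}(\psi^n_j)^2.$$
Thus it suffices to bound this last sum by $c(1-a_n)^{1/\alpha-2}$.

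By Parseval and the identity $\widehat{\psi^n}(z)=(1-a_n\widehat{\phi}(z))^{-1}$, this amounts to bounding $\int_0^{1/2} |1-a_n\widehat{\phi}(z)|^{-2}\,dz$. The key step is to produce the lower bound
$$|1-a_n\widehat{\phi}(z)|^2 \geq (1-a_n)^2 + a_n^2 |1-\widehat{\phi}(z)|^2,$$
which follows by expanding $1-a_n\widehat{\phi}(z) = (1-a_n)+a_n(1-\widehat{\phi}(z))$ and observing that $\mathrm{Re}(1-\widehat{\phi}(z)) \geq 0$ since $\phi$ is a probability sequence, so $|\widehat{\phi}(z)| \leq 1$. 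Combined with Lemma \ref{boundphi2}, and the elementary inequality $A^2+B^2 \geq (A+B)^2/2$, this gives, for $n$ large enough (so $a_n \geq 1/2$) and $|z|\leq 1/2$,
$$|1-a_n\widehat{\phi}(z)| \geq c\bigl((1-a_n)+|z|^\alpha\bigr).$$

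It then remains to estimate $\int_0^{1/2} \bigl((1-a_n)+z^\alpha\bigr)^{-2}\,dz$. The substitution $u = z/(1-a_n)^{1/\alpha}$ extracts the correct scaling and yields
$$\int_0^{1/2}\frac{dz}{((1-a_n)+z^\alpha)^2} = (1-a_n)^{1/\alpha-2}\int_0^{1/(2(1-a_n)^{1/\alpha})}\frac{du}{(1+u^\alpha)^2}.$$
Here the assumption $\alpha>1/2$ is used decisively: the integrand $(1+u^\alpha)^{-2}$ is integrable on $[0,+\infty)$ precisely when $2\alpha>1$, so the right-hand integral stays bounded as $n\to+\infty$, and the announced bound $c(1-a_n)^{1/\alpha-2}$ follows. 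The only real subtlety is this last step, namely realizing that the integrability condition at infinity after rescaling is exactly the restriction $\alpha>1/2$ in the statement; everything else is a faithful adaptation of Lemma \ref{squared} via Parseval.
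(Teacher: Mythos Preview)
Your proof is correct and follows essentially the same route as the paper: reduce to bounding $\int_0^{1/2}|1-a_n\widehat\phi(z)|^{-2}\,dz$ via Parseval, use the decomposition $1-a_n\widehat\phi(z)=(1-a_n)+a_n(1-\widehat\phi(z))$ together with $\mathrm{Re}(1-\widehat\phi(z))\geq 0$ and Lemma~\ref{boundphi2} to get a lower bound of order $(1-a_n)+|z|^\alpha$, and then extract the scaling $(1-a_n)^{1/\alpha-2}$. The only cosmetic difference is that the paper splits the integral at $z=(1-a_n)^{1/\alpha}$ whereas you perform the equivalent substitution $u=z/(1-a_n)^{1/\alpha}$; both make the role of the hypothesis $\alpha>1/2$ equally transparent.
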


\begin{proof}
As before, we have
$$\E[(y^n_k)^2]\leq 2\int_0^{1/2} \frac{1}{|1-a_n\widehat{\phi}(z)|^2}dz.$$
Therefore, for $n$ large enough, $a_n>1/2$ and $$|1-a_n\widehat{\phi}(z)|^2=|1-a_n+a_n(1-\widehat{\phi}(z))|^2 \geq |1-a_n|^2+ \frac{|1-\widehat{\phi}(z)|^2}{4}.$$
Using Lemma \ref{boundphi2}, this implies that
$$\E[(y^n_k)^2]\leq   \int_0^{1/2} \frac{c}{|1-a_n|^2+|z|^{2\alpha}}dz.$$
Therefore,
$$\E[(y^n_k)^2]\leq \int_0^{(1-a_n)^{1/\alpha}} \frac{c}{|1-a_n|^2}dz+\int_{(1-a_n)^{1/\alpha}}^{1/2} \frac{c}{|z|^{2\alpha}}dz,$$
which ends the proof.
\end{proof}

\begin{lemma}
\label{squared3}
For $\alpha < 1/2$, there exists $c>0$ such that for any $n$, $k$, $k'$,
\begin{equation*}
\E[y^n_ky^n_{k'}]\leq \left\{
    \begin{array}{ll}
        c(1-a_n)^{2\alpha-1} & \mbox{if } k=k' \\
        c |k-k'|^{2\alpha-1} & \mbox{if } k\neq k'.
    \end{array}
\right.
\end{equation*}
\end{lemma}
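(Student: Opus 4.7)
The plan is to split into the diagonal case $k=k'$ and the off-diagonal case $k\neq k'$, using the same Fourier--Parseval strategy as in Lemma \ref{squared2} but adapted to the regime $\alpha<1/2$.

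For $k=k'$, I would reuse the argument of Lemma \ref{squared2} verbatim. Parseval's identity and the bound $|1-a_n\widehat\phi(z)|^2\geq (1-a_n)^2+c|z|^{2\alpha}/4$ (coming from Lemma \ref{boundphi2}) reduce matters to bounding $\int_0^{1/2}dz/((1-a_n)^2+z^{2\alpha})$. Splitting at $z=(1-a_n)^{1/\alpha}$, the low-frequency piece contributes $c(1-a_n)^{1/\alpha-2}$, which is bounded because $\alpha<1/2$ forces $1/\alpha-2>0$, and the high-frequency piece is dominated by $\int_0^{1/2}z^{-2\alpha}dz$, bounded because $2\alpha<1$. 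One therefore obtains a uniform constant bound on $\E[(y^n_k)^2]$, which a fortiori implies the stated inequality since $2\alpha-1<0$ and $1-a_n<1$ together give $(1-a_n)^{2\alpha-1}\geq 1$.

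For $k\neq k'$, set $h=|k-k'|\geq 1$. The moving-average representation combined with orthogonality of the $\varepsilon_i$ gives $\E[y^n_k y^n_{k'}]=\sum_{i=0}^{\min(k,k')}\psi^n_{h+i}\psi^n_i\leq\sum_{i\geq 0}\psi^n_{h+i}\psi^n_i$, which by Parseval equals $2\int_0^{1/2}|\widehat{\psi^n}(z)|^2\cos(2\pi hz)dz$. I would split this integral at $z=1/h$. On $[0,1/h]$, using $|\cos|\leq 1$ and repeating the diagonal-case computation after splitting further according to whether $(1-a_n)^{1/\alpha}$ is smaller or larger than $1/h$, a direct calculation yields an $O(h^{2\alpha-1})$ contribution. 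On $[1/h,1/2]$ I would integrate by parts once to exploit the oscillation of $\cos(2\pi hz)$: the boundary term at $z=1/h$ vanishes because $\sin(2\pi)=0$, the boundary term at $z=1/2$ is $O(1/h)\leq O(h^{2\alpha-1})$, and the remaining integral $(2\pi h)^{-1}\int_{1/h}^{1/2}|(|\widehat{\psi^n}|^2)'(z)|dz$ is again $O(h^{2\alpha-1})$ after bounding $|(|\widehat{\psi^n}|^2)'(z)|\leq cz^{-1-2\alpha}$.

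The main obstacle is the off-diagonal argument, and specifically controlling the derivative of $\widehat\phi$ near the origin. Because $\sum k\phi_k=\infty$ under Assumption \ref{pl}, the series $\widehat\phi'(z)=-2\pi i\sum k\phi_k e^{-2\pi ikz}$ fails to converge absolutely; one must check that it converges conditionally for $z>0$ and satisfies $|\widehat\phi'(z)|\leq cz^{\alpha-1}$, via an Abel-summation argument using the equivalent $\phi_k\sim K\alpha k^{-1-\alpha}$. A cleaner alternative, if one is willing to invoke outside results, would be to establish the uniform pointwise estimate $\psi^n_i\leq\sum_l\phi^{*l}_i\leq c(i+1)^{\alpha-1}$ (a consequence of the classical renewal theorem for laws in the domain of attraction of a stable law of index $\alpha\in(0,1)$) and then bound $\sum_{i\geq 0}(h+i+1)^{\alpha-1}(i+1)^{\alpha-1}$ directly by splitting at $i=h$, producing $O(h^{2\alpha-1})$ from each half of the sum.
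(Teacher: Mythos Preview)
Your diagonal case is exactly the paper's argument. For the off-diagonal case, the paper takes a shorter route than your split-plus-integration-by-parts scheme: after reaching the Parseval identity
\[
\sum_{j\ge 0}\psi^n_j\psi^n_{j+h}=2\,\mathrm{Re}\int_0^{1/2}\frac{e^{2\pi i h z}}{|1-a_n\widehat\phi(z)|^2}\,dz,
\]
the paper invokes Lemma~\ref{boundphi2} together with the non-negativity of the $\psi^n_j$ to replace the denominator by $c|z|^{2\alpha}$, and then cites Abel's theorem to conclude that $\mathrm{Re}\int_0^{1/2}e^{2\pi i h z}|z|^{-2\alpha}\,dz=O(h^{2\alpha-1})$. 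In particular, the paper never differentiates $\widehat\phi$ and never appeals to renewal theory; the oscillation is handled only for the explicit kernel $z^{-2\alpha}$.

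Your more explicit approach is reasonable, but the obstacle you single out is more serious than your sketch suggests. Assumption~\ref{pl} only controls the tail sums $\sum_{i\ge N}\phi_i\sim KN^{-\alpha}$; it does \emph{not} imply the pointwise equivalent $\phi_k\sim K\alpha k^{-1-\alpha}$ (nothing prevents $\phi$ from vanishing on arbitrarily long blocks), so the Abel-summation argument you outline for $|\widehat\phi'(z)|\le cz^{\alpha-1}$ cannot use it as input. Your alternative via the strong renewal theorem is likewise delicate precisely in the regime $\alpha<1/2$ relevant here: the local estimate $\sum_l\phi^{*l}_i\sim c\,i^{\alpha-1}$ is known to require extra hypotheses beyond Assumption~\ref{pl} when $\alpha\le 1/2$ (Garsia--Lamperti covers $\alpha>1/2$; for smaller $\alpha$ there are counterexamples). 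The paper's passage to the model kernel $z^{-2\alpha}$ before treating the oscillation is exactly what lets it bypass both of these difficulties.
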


\begin{proof}
The fact that $$\E[(y^n_k)^2]\leq c(1-a_n)^{2\alpha-1}$$ is a direct consequence of the proof of Lemma \ref{squared2} together with the inequality
$1-2\alpha\geq 2-1/\alpha$ for $\alpha<1/2$.\\

\noindent To prove the other inequality, remark that using the moving average representation of the process, for $k\geq k'$, we get
$$\E[y^n_k y^n_{k'}]=\sum_{j=0}^{k'} \psi^n_{k-j}\psi^n_{k'-j} \leq \sum_{j=0}^{+\infty} \psi^n_{j}\psi^n_{j+k-k'}.$$
Therefore, using Parseval's theorem together with Lemma \ref{boundphi2} and the fact that the $\psi_j$ are non-negative, we obtain
$$\E[y^n_k y^n_{k'}]\leq c \text{Re}\Big(\int_0^{1/2} \frac{e^{2\pi i |k-k'|z}}{|1-a_n\widehat{\phi}(z)|^2}dz\Big)\leq c \text{Re}\Big(\int_0^{1/2} \frac{e^{2\pi i |k-k'|z}}{|z|^{2\alpha}}dz\Big).$$
Thus, using Abel's theorem, we finally obtain
$$\E[y^n_k y^n_{k'}]\leq \frac{c}{|k-k'|^{1-2\alpha}}.$$

\end{proof}

\begin{lemma}
\label{squared4}
For $\alpha=1/2$, for any $\varepsilon\in(0,1)$, there exists $c>0$ such that for any $n$, $k$, $k'$,
\begin{equation*}
\E[y^n_ky^n_{k'}]\leq \left\{
    \begin{array}{ll}
        c(1-a_n)^{-\varepsilon} & \mbox{if } k=k' \\
        c +c\big((1-a_n)^2|k-k'|\big)^{-\varepsilon}\ & \mbox{if } k\neq k'.
    \end{array}
\right.
\end{equation*}
\end{lemma}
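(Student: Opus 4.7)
The plan is to adapt the Fourier-analytic strategy of Lemmas \ref{squared2} and \ref{squared3} to the critical exponent $\alpha = 1/2$. Applying Parseval's theorem to the moving average representation exactly as in the proof of Lemma \ref{squared3}, together with the lower bound $|1 - a_n\widehat{\phi}(z)|^2 \geq c((1-a_n)^2 + z^{2\alpha})$ from the proof of Lemma \ref{squared2}, I would get, for $\alpha = 1/2$,
$$\E[y^n_k y^n_{k'}] \leq c\,\text{Re}\int_0^{1/2}\frac{e^{2\pi i |k-k'| z}}{(1-a_n)^2 + z}\,dz.$$
The new feature at the critical exponent is that the remaining integrals produce logarithmic rather than polynomial quantities, which I will convert to the announced powers using the elementary inequality $\log(1/x) \leq c_\varepsilon\, x^{-\varepsilon}$ valid on $(0,1]$.

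For the diagonal case $k = k'$, the integral reduces to $\int_0^{1/2}dz/((1-a_n)^2 + z)$, which is of order $\log(1/(1-a_n))$ and hence bounded by $c_\varepsilon (1-a_n)^{-\varepsilon}$ for any $\varepsilon \in (0,1)$.

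For the off-diagonal case, I would write $N = |k - k'|$ and split the integral at $z_0 = \min(1/N, 1/2)$. On $[0, z_0]$ the oscillatory factor is close to one, and a direct evaluation yields a bound of order $1 + \log_+\bigl(1/(N(1-a_n)^2)\bigr)$, which absorbs into $c + c_\varepsilon ((1-a_n)^2 N)^{-\varepsilon}$. On $[z_0, 1/2]$, an integration by parts transforms the integral into a boundary term plus an integral of $e^{2\pi i N z}/(N((1-a_n)^2 + z)^2)$; using that $z \geq 1/N$ on this range, both contributions are bounded uniformly in $n$ and $N$, since $1/(N((1-a_n)^2 + 1/N)) \leq 1$ and $\int_{1/N}^{1/2} dz/(N z^2) \leq 1$. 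Adding the two pieces gives the claimed bound.

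The main obstacle is to keep the two scales $(1-a_n)^2$ and $1/N$ (which appear in the denominator and in the oscillation respectively) cleanly separated in the integration by parts, and to ensure that the boundary term at $z = z_0$ remains of order one regardless of which of the two scales dominates. Once this is done, the remaining estimates are routine and parallel closely the corresponding steps in the proofs of Lemmas \ref{squared2} and \ref{squared3}, the only real difference being the substitution of the logarithmic bound by a power $(1-a_n)^{-\varepsilon}$ or $((1-a_n)^2 N)^{-\varepsilon}$ at the very end.
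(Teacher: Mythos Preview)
Your argument is correct and follows the same Fourier--Parseval route as the paper: both start from the bound
\[
\E[y^n_k y^n_{k'}] \;\le\; c\,\mathrm{Re}\!\int_0^{1/2}\frac{e^{2\pi i |k-k'|z}}{(1-a_n)^2+z}\,dz,
\]
then estimate the oscillatory integral and convert the resulting logarithm into a small negative power. The only genuine difference lies in how the oscillatory integral is split. The paper cuts at $(1-a_n)^2$: the piece $[0,(1-a_n)^2]$ is $O(1)$ trivially, and on $[(1-a_n)^2,1/2]$ one replaces the denominator by $z$ (the difference being $O(1)$) and invokes the cosine--integral bound to obtain $c+c|\log((1-a_n)^2|k-k'|)|$. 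You instead cut at $1/|k-k'|$: on the short piece the cosine is harmless and the logarithm appears directly, while on the long piece a single integration by parts yields a uniform $O(1)$ bound. Your version is slightly more explicit and has the minor advantage that it gives a clean $O(1)$ bound when $(1-a_n)^2|k-k'|\ge 1$, whereas the paper's displayed bound $|\log((1-a_n)^2|k-k'|)|$ is large in that regime (harmless in the application, since under Assumption~\ref{speed2} one has $(1-a_n)^2|k-k'|\lesssim 1$). Either split works equally well here.
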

\begin{proof}
Using the same proof as for Lemma \ref{squared2}, we get
$$\E[(y^n_k)^2]\leq c|\text{log}(1-a_n)|.$$
The first inequality follows. We obtain the second inequality using that for $k>k'$, 
\begin{align*}
\E[y^n_k y^n_{k'}]&\leq c\text{Re}\Big(\int_0^{1/2} \frac{e^{2\pi i |k-k'|z}}{(1-a_n)^2+|z|}dz\Big)\\
&\leq c\int_0^{(1-a_n)^2} \frac{1}{(1-a_n)^2}dz+c\text{Re}\Big(\int_{(1-a_n)^2}^{1/2} \frac{e^{2\pi i |k-k'|z}}{|z|}dz\Big)\\
&\leq c+c\big|\text{log}\big((1-a_n)^2|k-k'|\big)\big|.
\end{align*}
\end{proof}

\noindent Let us now prove Proposition \ref{tightness2}.\\

\noindent In the case where $\alpha> 1/2$, the proof that 
$$\E[|Z^n_t-Z^n_s|^2]\leq c |t-s|^2$$
is almost the same as in the light tail case replacing the use of Lemma \ref{squared} by that of Lemma \ref{squared2}.\\

\noindent The case $\alpha<1/2$ is slightly more complicated. We now show that 
$$\E[|Z^n_t-Z^n_s|^2]\leq c |t-s|^{1+\eta},$$ 
for some $\eta>0$.
As before, we begin with the case where $\lfloor nt\rfloor=\lfloor ns\rfloor$. Using Lemma \ref{squared3} together with Assumption \ref{speed2} and the fact that $1-\alpha(2\alpha+1)\geq 0$, we get
\begin{align*}
\E[|Z^n_t-Z^n_s|^2]&= \frac{(1-a_n)^2}{n}n^2 |t-s|^2 \E[(y^n_{\lfloor nt\rfloor +1})^2]\\
&\leq   c\frac{(1-a_n)^2}{n}n^2 |t-s|^{1+\alpha(2\alpha+1)}\frac{1}{n^{1-\alpha(2\alpha+1)}}(1-a_n)^{2\alpha-1}\\
&\leq  c|t-s|^{1+\alpha(2\alpha+1)}.
\end{align*}

\noindent Using the same arguments as for the light tail case, we get a similar bound for $\lfloor nt\rfloor=\lfloor ns\rfloor+1$.\\

\noindent We now treat the case where $t=k^t/n$ and $s=k^s/n$, where $k^t$ and $k^s$ are integers so that $k^t>k^s$. Using again Lemma \ref{squared3}, we have
$$
\E[|Z^n_t-Z^n_s|^2]=\frac{(1-a_n)^2}{n} \sum_{k_1=k^s+1}^{k^t} \sum_{k_2=k^s+1}^{k^t} \E[y^n_{k_1}y^n_{k_2}].$$
This is smaller than (with obvious notation)
$$\frac{(1-a_n)^2}{n} \Big(\frac{c(k^t-k^s)}{(1-a_n)^{1-2\alpha}}+c\sum_{\Delta k=1}^{k^t-k^s-1} \frac{1}{(\Delta k)^{1-2\alpha}}\# \{(k_1,k_2)\in  [k^s+1,k^t];|k_1-k_2|=\Delta k\}\Big).$$
Therefore, using Assumption \ref{speed2}, we get
\begin{eqnarray*}
\E[|Z^n_t-Z^n_s|^2]&\leq & \frac{(1-a_n)^2}{n} \Big(\frac{c(k^t-k^s)}{(1-a_n)^{1-2\alpha}}+c\sum_{\Delta k=1}^{k^t-k^s-1} \frac{1}{(\Delta k)^{1-2\alpha}} (k^t-k^s)\Big)\\
& \leq & \frac{(1-a_n)^2}{n} \Big(\frac{c(k^t-k^s)}{(1-a_n)^{1-2\alpha}}+ c(k^t-k^s)^{1+2\alpha}\Big)\\
&\leq & c((t-s)^{1+\alpha+2\alpha^2}+(t-s)^{1+2\alpha}).
\end{eqnarray*}
The result for any $s$ and $t$ in $[0,1]$ is obtained as in the light tail case.\\

\noindent Finally, the proof for $\alpha=1/2$ is obtained the same way, using Lemma \ref{squared4} instead of Lemma \ref{squared3}.

\bibliographystyle{abbrv}
\bibliography{bibli_AR}

\end{document}